\theoremstyle{plain}
\newtheorem{theorem}{Theorem}[section]
\newtheorem{proposition}[theorem]{Proposition}
\newtheorem{lemma}[theorem]{Lemma}
\newtheorem{remark}[theorem]{Remark}
\newtheorem{definition}[theorem]{Definition}
\newtheorem{example}[theorem]{Example}
\newtheorem{assumption}[theorem]{Assumption}
\theoremstyle{nonumberplain}
\newenvironment{proof}[1][]
{\ifthenelse{\equal{#1}{}}{\smallskip\noindent\textsl{Proof. }}{\smallskip
\noindent\textsl{Proof #1. }}}{\hfill$\Box$}
\newcommand{\conv}{\underset{n\rightarrow +\infty}{\longrightarrow}}
\begin{document}

\title{Risk-averse asymptotics for reservation prices}


\author{Laurence Carassus \thanks{Laboratoire de Probabilit\'es et Mod\`eles Al\'eatoires,
Universit\'e Paris 7 Denis Diderot, 16 rue Clisson, 75013 Paris,
France (e-mail: carassus@math.jussieu.fr)} \and Mikl\'os
R\'asonyi\thanks{The Computer and Automation Institute of the
Hungarian Academy of Sciences, 1518 Budapest, P. O. Box 63., Hungary
(e-mail: rasonyi@sztaki.hu). The second author acknowledges
financial support from the Hungarian Science Foundation (OTKA) grant
F 049094 and dedicates this paper to Annam\'aria Brecz.}}

\date{\today}

\maketitle

\begin{abstract}

An investor's risk aversion is assumed to tend to infinity. In a fairly general setting, we present
conditions ensuring that the respective utility indifference prices of a given contingent
claim converge to its superreplication price.


\end{abstract}

\noindent \textbf{Keywords:} Utility indifference price,
Superreplication price, Convergence, Utility maximization, Risk
aversion.

\noindent \textbf{MSC 2000 Subject Classification:} Primary: 91B16,
91B28 ; Secondary: 93E20, 49L20

\noindent \textbf{OR/MS Subject Classification:} Primary: Utility /
Value theory ; Secondary: Finance / Asset Pricing

\noindent \textbf{JEL classification:} C61, C62, G11, G12

\section{Introduction}

In this article we investigate the effect of increasing risk aversion on utility-based prices.
We are dealing with the utility indifference price (or reservation price), defined in \cite{hodges}
for the first time. This is the minimal amount added to an option seller's initial capital
which allows her to attain the same utility that she would have attained from her initial capital
without selling the option, see Definition \ref{kas} below. Intuitively, when risk aversion tends
to infinity, reservation price should tend
to the superreplication price (i.e. the price of hedging the option without any risk).

This result was shown in \cite{rouge} for Brownian models and in \cite{sixauthor} in a semimartingale setting when the agent has constant absolute
risk aversion (i.e. for exponential utility functions). Certain other classes of utility functions were treated in \cite{bouchard-these}, models with transaction costs
were considered in \cite{bouchard}.

However, an extension of this result to general utility functions was lacking. In \cite{laurence1}
and \cite{laurence2} the case of discrete-time markets was treated for utilities on the
positive axis as well as on the real line. Now we prove this result in a continuous-time
semimartingale framework, under suitable hypotheses.

In section \ref{s1} we model the agent's preferences and
introduce a growth condition (related to the elasticity of utility functions), in
section \ref{s2} the market model and
a compactness assumption are discussed. In section \ref{s3} the concept of utility indifference
price is formally defined and the two main theorems are proved.

\section{Risk averse agents}\label{s1}

We consider agents trading in the market with initial endowment $z\in\mathbb{R}$. We assume that

\begin{assumption}\label{a1}
$U_n$, $n\in\mathbb{N}$ are twice continuously differentiable,
strictly concave and increasing functions on $\mathbb{R}$ such that for each $x\in\mathbb{R}$,
\begin{equation}\label{r}
r_n(x):=\frac{-U_n''(x)}{U_n'(x)} \conv \infty.
\end{equation}
\end{assumption}

The function $r_n$ is called the (absolute)
risk aversion of an agent with utility function $U_n$. This concept was introduced in
\cite{arrow} and \cite{pratt}. Here we are interested in the
case where this measure of risk aversion tends to infinity.

We take the Fenchel conjugates of ${U}_n$:
\begin{equation}
{V}_n(y):=\sup_{x\in\mathbb{R}}\{ U_n(x)-xy\},\quad y\in (0,\infty).
\end{equation}
As easily checked, ${V}_n$ is a finite convex function.

We stipulate a growth condition on the conjugates of the utility functions we consider. Such assumptions are often
referred to as ``elasticity conditions''.

\begin{assumption}\label{elastic} For each $[\lambda_0,\lambda_1]\subset (0,\infty)$ there exist positive constants
$C_1,C_2,C_3$ such that for all $n$ and for all $y>0$,
\begin{equation}\label{GRRR}
{V}_n(\lambda y)\leq C_1{V}_n(y)+C_2y+C_3.
\end{equation}
holds for each $\lambda\in [\lambda_0,\lambda_1]$.
\end{assumption}

\begin{remark} For $n$ fixed, condition \eqref{GRRR} is equivalent to
\begin{equation}\label{okt}
\limsup_{x\to\infty}\frac{x{U}_n'(x)}{{U}_n(x)}<1,\quad
\liminf_{x\to -\infty}\frac{x{U}_n'(x)}{{U}_n(x)}>1,
\end{equation}
as shown in \cite{frittelli}.


The first of the two conditions in \eqref{okt} was introduced in
\cite{kramkov-schachermayer}, the second one in
\cite{schachermayer}. A utility function ${U}_n$ satisfying
\eqref{okt} is said to have \emph{reasonable asymptotic elasticity}
(terminology of \cite{schachermayer}). Thus condition \eqref{GRRR} is a
(dually formulated) uniform reasonable asymptotic elasticity
condition.
Another uniform asymptotic elasticity condition appears as
Assumption 2.3 of \cite{laurence3}. About the derivation of
equivalances like that of \eqref{okt} and \eqref{GRRR} consult section
6 of \cite{kramkov-schachermayer}, section 4 of \cite{schachermayer}
and \cite{frittelli}.
\end{remark}



\section{Market Model}\label{s2}

Our market is modelled by an adapted $d$-dimensional stochastic process $S$ on a given continuous-time
stochastic basis $(\Omega,\mathcal{F},(\mathcal{F}_t)_{t\geq 0},P)$. We think that $S$
represents the  evolution of the (discounted) prices of $d$ assets. For simplicity we assume $S$ locally
bounded (to avoid technical complications related to $\sigma$-martingales). Absence of arbitrage is stipulated by
\[
\mathcal{M}\neq \emptyset,
\]
where
$\mathcal{M}$ denotes the set of measures $Q\sim P$ such that $S$ is a local martingale
under $Q$.

We make the following compactness assumption.

\begin{assumption}\label{compact}
There exists $Q_0\in\mathcal{M}$ such that the sequence
${V}_n(dQ_0/dP)$ is uniformly integrable (with respect to $P$). We
denote by $\mathcal{M}_v$ the set of such $Q_0$s.
\end{assumption}

\begin{remark}
Similar conditions have already appeared in investigations on the stability of optimal strategies
with respect to perturbations of utility functions, see \cite{larsen} and \cite{costas}.
Both papers consider a sequence $U_n$ of utility functions converging to a limiting utility $U$
and show convergence of the corresponding optimal strategies/utility prices.

In \cite{larsen} a sequence $U_n$ is
assumed to be dominated by some $\overline{U}$ with conjugate function $\overline{V}$ such that
$\overline{V}(Z)$ is integrable, where $Z$ is (the candidate for) the density of
the minimal martingale measure. In Assumption (UI) of \cite{costas} we find
essentially the uniform integrability of $V_n(y dQ_n/dP)$ for each $y>0$, where
$Q_n$ is a suitable sequence in $\mathcal{M}$. Note that \cite{costas}
also considers perturbations of the underlying probability measure and that in both mentioned
papers the $U_n$ are defined on the positive axis.
\end{remark}

This elasticity hypothesis \ref{elastic} allows us to prove that  the set $\mathcal{M}_v$ is in fact large.
\begin{lemma}\label{dense} Assume that there is $x_0\in\mathbb{R}$ such that
$U_n(x_0)$ is bounded from below. Under Assumptions \ref{elastic}
and \ref{compact}, the set $\mathcal{M}_v$ is dense in $\mathcal{M}$
with respect to the total variation norm topology.
\end{lemma}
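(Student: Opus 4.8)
The plan is to show that every $Q\in\mathcal M$ can be approximated in total variation by elements of $\mathcal M_v$. Write $Z:=dQ/dP$ and $Z_0:=dQ_0/dP$ for a fixed $Q_0\in\mathcal M_v$ supplied by Assumption \ref{compact}, and denote by $Z_t:=E[Z|\mathcal F_t]$ and $Z^0_t:=E[Z_0|\mathcal F_t]$ the associated strictly positive density martingales.

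First I would record the two one-sided estimates that make uniform integrability of $V_n(\,\cdot\,)$ tractable. Whenever a candidate density $Z'$ satisfies $Z'\in[Z_0/c,\,cZ_0]$ pointwise for some constant $c\ge 1$, the elasticity Assumption \ref{elastic} applied on the interval $[1/c,c]$ yields $V_n(Z')\le C_1V_n(Z_0)+C_2Z_0+C_3$, and the right-hand side is a uniformly integrable family by Assumption \ref{compact}. For the lower bound, the definition of the conjugate gives $V_n(Z')\ge U_n(x_0)-x_0Z'\ge \inf_nU_n(x_0)-|x_0|cZ_0$, a fixed integrable minorant by the hypothesis that $U_n(x_0)$ is bounded below. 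Hence any such $Z'$ automatically has $V_n(Z')$ uniformly integrable, i.e. the corresponding measure lies in $\mathcal M_v$; the whole problem is thus reduced to producing, close to $Q$ in total variation, martingale measures whose density ratio against $Q_0$ is squeezed into a compact subinterval of $(0,\infty)$.

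The naive attempt --- convex combinations $Q_\e=(1-\e)Q+\e Q_0$ --- does converge to $Q$ in total variation and stays in $\mathcal M$, but it fails exactly on the set $\{Z\gg Z_0\}$: there $(1-\e)Z+\e Z_0$ is comparable to $Z$ rather than to $Z_0$, the ratio to $Z_0$ is unbounded, and no a priori control on $V_n(Z)$ is available. To fix this I would instead truncate the density ratio by \emph{pasting}. Let $R_t:=Z_t/Z^0_t$, the density process of $Q$ with respect to $Q_0$ (a positive $Q_0$-martingale), put $\tau_c:=\inf\{t:R_t\notin(1/c,c)\}$, and define $Q^c$ by following $Q$ up to $\tau_c$ and $Q_0$ afterwards, so that $dQ^c/dQ_0=R_{\tau_c}$ and $dQ^c/dP=R_{\tau_c}Z_0$. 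Two facts then finish the proof: (i) the set of local martingale measures is stable under such pasting at a stopping time, so $Q^c\in\mathcal M$; and (ii) $R_{\tau_c}$ takes values in a compact subinterval of $(0,\infty)$, whence by the second paragraph $Q^c\in\mathcal M_v$. Finally $Q^c=Q$ on $\{\tau_c=\infty\}$, and since the nonnegative martingale $R$ converges to the a.s. finite and strictly positive limit $R_\infty=dQ/dQ_0$, we have $P(\tau_c<\infty)\to0$ as $c\to\infty$, which forces $\|Q^c-Q\|_{\mathrm{TV}}\to0$ and gives the desired density.

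The step I expect to be the main obstacle is the pasting construction itself: verifying rigorously that the concatenated measure is again an equivalent local martingale measure, and controlling the value $R_{\tau_c}$ at the stopping time. Because $S$ is only locally bounded, the density process $R$ may jump, so at $\tau_c$ the ratio can overshoot the interval $(1/c,c)$; I would handle this by enlarging the target interval or by absorbing the overshoot into the constants $C_1,C_2,C_3$, using that the jumps of $R$ are controlled by those of the density martingales, neither of which affects the elasticity estimate. Once the pasting is in place, the total-variation estimate is a routine consequence of the convergence of the nonnegative martingale $R$.
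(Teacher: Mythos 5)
Your second paragraph is exactly the paper's key estimate, and it is correct: for any candidate density of the form $Z'=\lambda Z_0$ with $\lambda$ valued in a compact subinterval of $(0,\infty)$, Assumption \ref{elastic} yields the uniformly integrable majorant $C_1V_n(Z_0)+C_2Z_0+C_3$ (Assumption \ref{compact}), while the conjugate inequality $V_n(Z')\geq U_n(x_0)-x_0Z'$ together with the lower bound on $U_n(x_0)$ yields an integrable minorant, so the corresponding measure lies in $\mathcal{M}_v$. Where you diverge from the paper --- and where the gap lies --- is in how the approximating measures with compactly pinched density ratio are produced. The paper does not construct them by hand: it invokes the theorem of \cite{kabanov-stricker} as a black box to get density of $\mathcal{M}_b=\{Q\in\mathcal{M}:\ dQ/dQ_0\mbox{ bounded}\}$ in $\mathcal{M}$, and then forms convex combinations $\alpha Q_0+(1-\alpha)Q$ with $Q\in\mathcal{M}_b$, whose ratio to $Q_0$ lies in $[\alpha,\alpha+(1-\alpha)K]$. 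Note that the ``naive attempt'' you dismiss is not naive once it is applied to elements of $\mathcal{M}_b$ rather than to arbitrary $Q$: the convex combination is precisely what supplies the missing lower bound on the ratio, and the upper bound is what \cite{kabanov-stricker} is cited for.

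Your pasting construction is in effect an attempt to re-prove the Kabanov--Stricker theorem, and it fails at the point you yourself flag. On $\{\tau_c<\infty\}$ the stopped ratio $R_{\tau_c}$ need not lie in any fixed compact interval: density processes of elements of $\mathcal{M}$ are general strictly positive c\`adl\`ag martingales, and local boundedness of $S$ gives no control on their jumps, so the overshoot is an unbounded random variable (upwards above $c$ and downwards below $1/c$). ``Enlarging the target interval'' or ``absorbing the overshoot into $C_1,C_2,C_3$'' cannot work, because Assumption \ref{elastic} provides constants only for $\lambda$ ranging over a prescribed compact interval fixed in advance, and on the overshoot event there is no bound at all on $V_n(R_{\tau_c}Z_0)$; hence your $Q^c$ need not belong to $\mathcal{M}_v$. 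This jump overshoot is exactly the difficulty that makes the Kabanov--Stricker density result a theorem rather than an exercise, so it cannot be waved away in one sentence. The pasting-stability claim (i) and the total-variation convergence via the UI martingale $R$ are fine, but the repair for the crux is to abandon the stopping-time construction: cite \cite{kabanov-stricker} for the boundedness of the ratio from above, and use the convex combination with $Q_0$ for the bound from below; your uniform-integrability argument then applies verbatim and the proof coincides with the paper's.
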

\begin{proof} This result was essentially reported in Proposition 6
of \cite{unbounded}, without proof.
Fix $Q_0\in\mathcal{M}_v$. We know from \cite{kabanov-stricker} that the set
\[
\mathcal{M}_b:=\{Q\in\mathcal{M}: dQ/dQ_0\mbox{ is bounded }\}
\]
is dense in $\mathcal{M}$. It follows by concavity of $\mathcal{M}$ that also
\[
\mathcal{M}_{bb}:=\left\{ \alpha Q_0+\left(1-\alpha \right)Q:\ 0<\alpha< 1,\ Q\in\mathcal{M}_b\right\}
\]
is a dense subset of $\mathcal{M}$. We shall show that
$\mathcal{M}_{bb}\subset\mathcal{M}_{v}$. Take an arbitrary
$Q\in\mathcal{M}_b$ such that $dQ/dQ_0\leq K$ and any fixed $0<\alpha<1$. We have
\begin{eqnarray*}
{V}_n(\alpha dQ_0/dP +(1-\alpha)dQ/dP)& \geq & U_n(x_0)-x_0 (\alpha
dQ_0/dP+(1-\alpha)dQ/dP).
\end{eqnarray*}
Moreover,
\begin{eqnarray*}
{V}_n(\alpha dQ_0/dP +(1-\alpha)dQ/dP) & \leq & C_1{V}_n(dQ_0/dP) +C_2(dQ_0/dP)+C_3,
\end{eqnarray*}
by Assumption \ref{elastic} (choose $C_1,C_2,C_3$ corresponding to
the interval $[\alpha, \alpha+(1-\alpha)K]$ and note that
$\alpha+(1-\alpha)dQ_0/dQ$ falls into this interval).

Uniform integrability of ${V}_n(\alpha
dQ_0/dP+(1-\alpha)dQ/dP),n\in\mathbb{N}$ now follows from Assumption
\ref{compact}.
\end{proof}

\section{Utility indifference prices}\label{s3}

It follows from $\mathcal{M}\neq\emptyset$
that $S$ is a semimartingale and we may define the set of admissible trading strategies as
predictable $S$-integrable processes with a finite credit line (to avoid
doubling strategies).

\begin{definition}
\[
\mathcal{A}:=\{\phi:\phi\mbox{ predictable},S\mbox{-integrable and for some }w>0,
V^{0,\phi}\geq -w\},
\]
where we write $V^{x,\phi}_t:=x+(\phi\cdot S)_t$ for the value process of strategy $\phi$
starting from initial endowment $x$.
\end{definition}

Fix $T>0$ and a bounded random variable $G$, interpreted as a contingent claim to be delivered at the
end of the period $[0,T]$.
By optimal trading an agent with initial capital $x\in\mathbb{R}$ and utility $U_n$ delivering
the claim $G$ may attain
\begin{equation}
\label{max} u_n(x,G):=\sup_{\phi\in\mathcal{A}}
EU_n(V^{x,\phi}_T-G),
\end{equation}
this is well-defined as admissible strategies have bounded from
below value processes.

\begin{definition}\label{kas}
The utility indifference price for $U_n$ and initial endowment $z$ is
\begin{equation}\label{xx}
p_n(z,G):=\inf\{ p\in\mathbb{R}: u_n(z+p,G)\geq u_n(z,0)\},
\end{equation}
the minimal extra capital that allows for delivering $G$ while attaining the same utility as
without claim delivery.
\end{definition}

The utility-free concept of the superhedging price of a bounded contingent claim (random variable) $G$
is defined as
\[
\pi(G):=\inf\{x\in\mathbb{R}:\mbox{ there is }\phi\in\mathcal{A}\mbox{ such that }
V_T^{x,\phi}\geq G\mbox{ a.s.}\}.
\]

The following dual characterization is a fundamental result of mathematical finance, see e.g. \cite{follmer-kabanov}
and the references therein.
\begin{theorem}\label{superhedging}
Under $\mathcal{M}\neq\emptyset$, we have
\[
\pi(G)=\sup_{Q\in\mathcal{M}} E_QG.
\]
\end{theorem}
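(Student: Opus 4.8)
The plan is to prove the two inequalities separately. The inequality $\pi(G)\ge\sup_{Q\in\mathcal{M}}E_QG$ is elementary and follows from the supermartingale property of admissible wealth processes, while the reverse inequality rests on the optional decomposition theorem.

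For the easy direction, fix a superhedging capital $x$ together with an admissible $\phi$ satisfying $V^{x,\phi}_T\ge G$ a.s., and fix $Q\in\mathcal{M}$. Since $\phi\in\mathcal{A}$, the value process $V^{x,\phi}=x+(\phi\cdot S)$ is bounded from below by a constant, so under $Q$ the stochastic integral $\phi\cdot S$ is a local martingale bounded from below, hence a supermartingale by Fatou's lemma. Consequently $E_Q(\phi\cdot S)_T\le 0$, which gives $E_QG\le E_QV^{x,\phi}_T\le x$. Taking the supremum over $Q\in\mathcal{M}$ and then the infimum over all admissible superhedging capitals $x$ yields $\sup_{Q\in\mathcal{M}}E_QG\le\pi(G)$.

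For the reverse inequality, set $x^{*}:=\sup_{Q\in\mathcal{M}}E_QG$ and introduce the dual value process
\[
U_t:=\operatorname*{ess\,sup}_{Q\in\mathcal{M}}E_Q[G\mid\mathcal{F}_t],\qquad t\in[0,T].
\]
The first step is to verify that the family $\{E_Q[G\mid\mathcal{F}_t]:Q\in\mathcal{M}\}$ is upward directed, which follows by pasting two martingale measures at the random time at which one conditional expectation dominates the other; this directedness lets one realise the essential supremum along an increasing sequence and produces a càdlàg modification of $U$ that is simultaneously a supermartingale under every $Q\in\mathcal{M}$, with $U_0=x^{*}$ and $U_T=G$. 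The second and principal step is to invoke the optional decomposition theorem (Föllmer--Kabanov, in the locally bounded semimartingale setting referenced above): a process that is a supermartingale under all $Q\in\mathcal{M}$ simultaneously admits a representation
\[
U_t=U_0+(\phi\cdot S)_t-C_t,
\]
with $\phi$ predictable and $S$-integrable and $C$ adapted increasing with $C_0=0$.

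It then remains to read off a superhedging strategy from this decomposition. Since $G$ is bounded, say $|G|\le M$, every conditional expectation satisfies $E_Q[G\mid\mathcal{F}_t]\ge -M$, hence $U_t\ge -M$ and $(\phi\cdot S)_t=U_t-x^{*}+C_t\ge -M-x^{*}$, so that $\phi\in\mathcal{A}$. Evaluating the decomposition at $T$ gives $V^{x^{*},\phi}_T=x^{*}+(\phi\cdot S)_T=U_T+C_T=G+C_T\ge G$ a.s., whence $x^{*}$ is a superhedging capital and $\pi(G)\le x^{*}$. Combining the two inequalities yields the stated equality. The main obstacle is the optional decomposition theorem itself: establishing the simultaneous supermartingale property of $U$ through upward directedness and càdlàg aggregation, and then extracting the integrand $\phi$, is the genuine analytic content, whereas the remaining steps are routine.
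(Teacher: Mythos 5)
Your proposal is correct: the paper itself offers no proof of this theorem, treating it as a known result and citing F\"ollmer--Kabanov, and your argument is precisely the standard proof from that literature --- the easy inequality via the Ansel--Stricker supermartingale property of admissible wealth under each $Q\in\mathcal{M}$, and the hard inequality via the simultaneous-supermartingale property of $U_t=\operatorname*{ess\,sup}_{Q\in\mathcal{M}}E_Q[G\mid\mathcal{F}_t]$ followed by the optional decomposition theorem. The only points glossed over (citing Ansel--Stricker for why $\phi\cdot S$ is a local martingale rather than merely a $\sigma$-martingale, triviality of $\mathcal{F}_0$ so that $U_0=\sup_{Q\in\mathcal{M}}E_QG$, and the c\`adl\`ag aggregation) are exactly the standard technical details handled in the cited reference.
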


We are now ready to state our main result i.e. the convergence of
utility indifference prices to the superreplication price. We will
present this result under two types of assumptions. The first one
(see Assumption \ref{a2}) refers to the existence of a initial
wealth for which all the investors have (asymptotically) a common
preference for $x_0$ and also a common non zero growth rate for
their preferences near $x_0$.

The second kind of assumptions impose the elasticity Assumption \ref{elastic} and
the compactness Assumption \ref{compact} on a normalized family of utility functions (see Theorem \ref{main2}).

\begin{assumption}\label{a2}
There exists some $x_0,\alpha, \beta \in \mathbb{R}$  with $\alpha
\in(0,\infty)$ and $|\beta|<\infty$ such that :
\begin{eqnarray*}
U_n'(x_0) & \conv & \alpha \\
U_n(x_0) & \conv & \beta.
\end{eqnarray*}
\end{assumption}

\begin{remark}
We will see in the examples below that this assumption is satisfied
for power and exponential utility functions. In those examples, we
have that $U_n(0)=0$ and $U_n'(0)=1$. This means that all investors
consider that utility of nothing is zero and that for very small
wealth their utility functions are approximatively linear with slope
one.
\end{remark}

The following proposition is the first step to Theorem \ref{main} :
\begin{proposition}\label{nulla}
Under Assumptions \ref{a1} and
\ref{a2}, for each $y>0$,
\[
{V}_n(y)\conv \beta -x_0y.
\]
\end{proposition}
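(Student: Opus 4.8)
The plan is to exploit the first-order condition characterizing the Fenchel conjugate and to show that, as risk aversion explodes, the maximizer defining $V_n(y)$ collapses onto $x_0$. Fix $y>0$. Since each $U_n$ is strictly concave and $C^2$, its derivative $U_n'$ is continuous and strictly decreasing, so the supremum defining $V_n(y)$ is attained at the unique point $x_n(y)$ solving $U_n'(x_n(y))=y$, provided $y$ lies in the range of $U_n'$; at that point $V_n(y)=U_n(x_n(y))-y\,x_n(y)$. First I would establish that this maximizer exists for all large $n$ and that $x_n(y)\to x_0$.

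The crux is to convert the pointwise divergence $r_n\to\infty$ into a statement about $U_n'$ near $x_0$. Writing $(\log U_n')'=-r_n$ and integrating gives, for any $\delta>0$,
\[
U_n'(x_0+\delta)=U_n'(x_0)\exp\!\left(-\int_{x_0}^{x_0+\delta}r_n(t)\,dt\right),\qquad
U_n'(x_0-\delta)=U_n'(x_0)\exp\!\left(\int_{x_0-\delta}^{x_0}r_n(t)\,dt\right).
\]
Because $U_n$ is increasing and concave we have $r_n\geq 0$, so Fatou's lemma applied to $r_n\to\infty$ yields $\int_{x_0}^{x_0+\delta}r_n\to\infty$ and likewise for the other integral. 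Since $U_n'(x_0)\to\alpha\in(0,\infty)$ is bounded, this forces $U_n'(x_0+\delta)\to 0$ and $U_n'(x_0-\delta)\to\infty$. Hence for $n$ large the value $y$ lies strictly between $U_n'(x_0+\delta)$ and $U_n'(x_0-\delta)$, so $y$ is in the range of $U_n'$ and monotonicity of $U_n'$ pins the maximizer into $(x_0-\delta,x_0+\delta)$. As $\delta>0$ is arbitrary, $x_n(y)\to x_0$.

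It then remains to pass to the limit in $V_n(y)=U_n(x_n(y))-y\,x_n(y)$. The linear term is immediate, since $y\,x_n(y)\to y\,x_0$. For the other term I would sandwich the increment $U_n(x_n(y))-U_n(x_0)$ between two monotone bounds: as $U_n'$ is decreasing, on the interval joining $x_0$ and $x_n(y)$ its values lie between $y=U_n'(x_n(y))$ and $U_n'(x_0)$, whence $|U_n(x_n(y))-U_n(x_0)|\leq \max\{y,U_n'(x_0)\}\,|x_n(y)-x_0|\to 0$, the factor $\max\{y,U_n'(x_0)\}$ being bounded by $U_n'(x_0)\to\alpha$. Combined with the hypothesis $U_n(x_0)\to\beta$, this gives $U_n(x_n(y))\to\beta$ and therefore $V_n(y)\to\beta-x_0 y$.

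The only genuinely delicate step is the second paragraph: upgrading the pointwise blow-up $r_n(x)\to\infty$ to divergence of the integrals $\int r_n$, which is exactly where nonnegativity of $r_n$ and Fatou's lemma are essential. Everything else reduces to the first-order condition and a monotone sandwich.
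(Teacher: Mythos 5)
Your proof is correct and follows essentially the same route as the paper's: characterize $V_n(y)$ through the unique maximizer $x_n(y)=I_n(y)$ of $x\mapsto U_n(x)-xy$, show that the blow-up of risk aversion forces $U_n'\to 0$ above $x_0$ and $U_n'\to\infty$ below $x_0$ so that $I_n(y)\to x_0$, and then pass to the limit in $V_n(y)=U_n(I_n(y))-yI_n(y)$. The only cosmetic differences are that you prove the derivative collapse from scratch (integrating $(\log U_n')'=-r_n$ and applying Fatou), whereas the paper delegates this step to the argument of Lemma 4 in \cite{laurence1}, and your mean-value bound $|U_n(x_n(y))-U_n(x_0)|\leq\max\{y,U_n'(x_0)\}\,|x_n(y)-x_0|$ plays exactly the role of the paper's concavity sandwich $0\leq V_n(y)-(U_n(x_0)-x_0y)\leq (I_n(y)-x_0)(U_n'(x_0)-y)$.
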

\begin{proof}
First, the argument of Lemma 4 in \cite{laurence1} shows that as $n\to\infty$,
\begin{eqnarray}
\label{c}
{U}_n'(x) \to \infty,\ x<x_0, & \quad & {U}_n'(x)\to 0,\ x>x_0.
\end{eqnarray}
Let $I_n$ be the inverse of ${U}_n'$ (which exists by strict
concavity of ${U}_n$).  We claim
\[
I_n(y)\to x_0,\mbox{ for }y>0.
\]
Indeed, let $y>0$. If we had $I_{n_k}(y) \geq x_0 +\varepsilon$ for some
$\varepsilon>0$ and a subsequence $n_k$ then
$U_{n_k}'(I_{n_k}(y))=y\leq {U}_{n_k}'(x_0+\varepsilon)$, but this
latter tends to $0$ by (\ref{c}), a contradiction.
The proof of the reverse inequality is similar and $I_n(y) \to x_0$.

First by definition ${V}_n(y)\geq U_n(x_0) -x_0y$. Since ${V}_n$ can
be calculated as ${V}_n(y)={U}_n(I_n(y))-I_n(y)y$; concavity of
${U}_n$ implies that ${V}_n(y)\leq U_n(x_0) +(I_n(y)-x_0) U'_n(x_0)
-I_n(y)y$. Thus
$$0\leq V_n(y)- (U_n(x_0) -x_0y) \leq (I_n(y)-x_0) (U'_n(x_0)
-y)\to 0,$$ $n\to\infty$, showing the claim.
\end{proof}





\begin{theorem}\label{main} If Assumptions \ref{a1},  \ref{elastic}, \ref{compact} and \ref{a2} hold then
for each bounded random variable $G$ the quantities $ u_n(x_0,G),
n\in\mathbb{N}$ are finite and the corresponding utility
indifference prices $p_n(x_0,G)$ tend to $\pi(G)$ as $n\to\infty$.
\end{theorem}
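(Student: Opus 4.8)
The plan is to prove convergence $p_n(x_0,G)\to\pi(G)$ by establishing the two inequalities $\liminf_n p_n(x_0,G)\ge\pi(G)$ and $\limsup_n p_n(x_0,G)\le\pi(G)$ separately.

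For the lower bound, I'd use the superhedging dual (Theorem \ref{superhedging}): $\pi(G)=\sup_{Q\in\mathcal{M}}E_QG$. The idea is that as risk aversion explodes, the agent becomes unwilling to accept any risk, so her indifference price cannot fall below what any martingale measure charges. Proposition \ref{nulla} is the key input: it says $V_n(y)\to\beta-x_0 y$ pointwise. The plan is to bound the dual value functions — by weak duality, $u_n(x,G)$ for the maximization \eqref{max} is controlled from above by an expression of the form $\inf_{Q}\{E V_n(y\,dQ/dP) + y(x - E_Q G)\}$ (a standard convex-duality bound for utility maximization with a claim). Feeding $V_n(y\,dQ/dP)\to\beta - x_0 y\,dQ/dP$ into this, and integrating (here uniform integrability from Assumption \ref{compact}, extended via Lemma \ref{dense}, lets me pass the limit through the expectation), should yield that $u_n(x,G)$ is asymptotically bounded by something like $\beta + y(x - x_0 - E_Q G)$ for every $Q$ and every $y>0$. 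Comparing with $u_n(x_0,0)\to\beta$ and optimizing over $y$ forces $u_n(x_0+p,G)<u_n(x_0,0)$ whenever $p<E_Q G$, hence $p_n(x_0,G)\ge E_Q G$; taking the sup over $Q\in\mathcal{M}_v$, dense in $\mathcal{M}$, gives the lower bound.

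For the upper bound, I'd argue more directly. Fix $\e>0$ and pick a superhedging strategy $\phi\in\mathcal{A}$ with $V_T^{x,\phi}\ge G$ for $x=\pi(G)+\e$. Then starting from $x_0+\pi(G)+\e$ and running $\phi$, the agent delivers $G$ while retaining a terminal position $V_T^{x_0,\phi}\ge x_0$, so she can attain utility at least $U_n(x_0)$-worth of outcome; more carefully, $u_n(x_0+\pi(G)+\e,\,G)\ge E U_n(V_T^{x_0,\phi})\ge u_n(x_0,0)$ would follow once I control the wealth from below (using admissibility and Jensen/monotonicity). This shows $p_n(x_0,G)\le\pi(G)+\e$ for all $n$, and hence $\limsup_n p_n(x_0,G)\le\pi(G)$.

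The finiteness of $u_n(x_0,G)$ is a preliminary I'd dispatch first, again from the dual upper bound together with Assumption \ref{compact} (so that $E V_n(dQ_0/dP)<\infty$). The main obstacle I expect is the lower bound: rigorously justifying the interchange of limit and expectation in $E V_n(y\,dQ/dP)\to E(\beta - x_0 y\,dQ/dP)$ uniformly enough to conclude, and establishing the weak-duality inequality for $u_n(x,G)$ in this real-line, possibly non-reflexive setting. This is where Assumption \ref{elastic} and the density Lemma \ref{dense} do the real work — they let me restrict to $Q\in\mathcal{M}_v$ where the uniform integrability needed for the limit passage is available, and then recover all of $\mathcal{M}$ by density, since $E_Q G$ is continuous in $Q$ under the total variation norm (as $G$ is bounded).
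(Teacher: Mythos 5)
Your lower bound is, in substance, the paper's own proof. The paper establishes exactly your weak-duality bound with the dual variable fixed at $y=1$: by the Fenchel inequality $U_n(x)\leq V_n(y)+xy$ and the supermartingale property of $V^{0,\phi}$ under any $Q\in\mathcal{M}$ (Ansel--Stricker, cited in the paper), one gets $EU_n(V_T^{x_0+p,\phi}-G)\leq E V_n(dQ/dP)+x_0+p-E_QG$ for all $\phi\in\mathcal{A}$; Proposition \ref{nulla} together with the uniform integrability in Assumption \ref{compact} passes the limit through the expectation for $Q\in\mathcal{M}_v$; Lemma \ref{dense} and Theorem \ref{superhedging} upgrade $\sup_{Q\in\mathcal{M}_v}E_QG$ to $\pi(G)$; and comparison with $u_n(x_0,0)\geq U_n(x_0)\to\beta$ finishes (this also gives finiteness of $u_n$, as in the paper). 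That the paper argues by contradiction along a subsequence while you argue directly is cosmetic; your conclusion $p_n(x_0,G)\geq p$ does additionally use that $u_n(\cdot,G)$ is nondecreasing, which is immediate. One caution: for $y\neq 1$ the uniform integrability of $V_n(y\,dQ/dP)$ does \emph{not} follow from Assumption \ref{compact} alone --- you would need Assumption \ref{elastic} to transfer it from $y=1$ --- but, as your own computation shows, $y=1$ already suffices, so the ``optimizing over $y$'' step can simply be dropped.

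The genuine defect is in your upper bound. The chain $u_n(x_0+\pi(G)+\e,G)\geq EU_n(V_T^{x_0,\phi})\geq u_n(x_0,0)$ is wrong as written: $u_n(x_0,0)$ is a supremum over all admissible strategies, so $EU_n(V_T^{x_0,\phi})\leq u_n(x_0,0)$ --- your second inequality points the wrong way. Running only the superhedge $\phi$ from $x_0+\pi(G)+\e$ and delivering $G$ leaves terminal wealth at least $x_0$, which gives $u_n(x_0+\pi(G)+\e,G)\geq U_n(x_0)$; this is strictly weaker than what the definition of $p_n$ requires, namely domination of $u_n(x_0,0)$, which may exceed $U_n(x_0)$. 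The standard fix (this is the content of the fact the paper simply cites from \cite{laurence1}) is to superpose the superhedge on an \emph{arbitrary} strategy: for $\psi\in\mathcal{A}$, the strategy $\psi+\phi$ is again admissible and
\[
V_T^{x_0+\pi(G)+\e,\psi+\phi}-G \;=\; V_T^{x_0,\psi}+V_T^{\pi(G)+\e,\phi}-G\;\geq\; V_T^{x_0,\psi},
\]
so monotonicity of $U_n$ and taking the supremum over $\psi$ yield $u_n(x_0+\pi(G)+\e,G)\geq u_n(x_0,0)$, hence $p_n(x_0,G)\leq\pi(G)+\e$ for every $n$. With this correction your argument is complete and coincides with the paper's.
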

\begin{remark}
The convergence in Theorem \ref{main} holds only for $x_0$. We will see in Theorem \ref{main2} how
the convergence for all initial capital $z$ can be checked under a second type of assumptions.
\end{remark}
\begin{proof}
It is a standard fact that that $p_n(x_0,G)\leq\pi(G)$, see page 152
of \cite{laurence1}. For the reverse inequality, we argue by
contradiction. Suppose that for some $\varepsilon>0$ and a
subsequence $n_k$ one has $p_k:=p_{n_k}(x_0,G)\leq
\pi(G)-\varepsilon$ for each $k$. We may suppose that $n_k=k$ and
$p_k\to \pi(G)-\varepsilon$, $k\to\infty$.

Take arbitrary $\phi\in\mathcal{A}$ and $Q\in\mathcal{M}_v$. Then
$E_Q(V^{x_0+p_k,\phi}_T-G)\leq x_0+p_k-E_Q G$ ($V^{0,\phi}$ is a
supermartingale by results of \cite{ansel}). Proposition \ref{nulla}
and $Q\in\mathcal{M}_v$ imply
\begin{eqnarray}\label{x}
\limsup_{k\to\infty}E({V}_k(dQ/dP)+(dQ/dP)(V^{x_0+p_k,\phi}_T-G)) \\
\nonumber \leq \beta-x_0+ (x_0+\limsup_{k\to\infty}p_k-E_Q G).
\end{eqnarray}
Introduce
\[
v_k^{\phi}:=\inf_{Q\in\mathcal{M}_v}
E({V}_k(dQ/dP)+(dQ/dP)(V^{x_0+p_k,\phi}_T-G)).
\]

It follows from Lemma \ref{dense} that $\sup_{Q\in\mathcal{M}_v}E_Q G=
\sup_{Q\in\mathcal{M}}E_Q G$, thus \eqref{x} and Theorem \ref{superhedging} imply
\[
\limsup_{k\to\infty} v_k^{\phi}\leq \beta +
\lim_{k\to\infty}p_k-\pi(G)=\beta-\varepsilon.
\]
By the definition of conjugate functions we have,
\[
\limsup_{k\to\infty}E{U}_k(V_T^{x_0+p_k,\phi}-G)\leq
\limsup_{k\to\infty} v_k^{\phi}\leq \beta -\varepsilon.
\]
It follows that ${u}_k$ are finite and
$\limsup_{k\to\infty}{u}_k(x_0+p_k,G)\leq \beta -\varepsilon$. But ${u}_k(x_0+p_k,G)\geq{u}_k(x_0,0)\geq
{U}_k(x_0)$, thus $\limsup_{k\to\infty}{u}_k(x_0+p_k,G) \geq \beta$
a contradiction.
\end{proof}
\begin{remark} Let us consider the condition that there
exists $Q_0\in\mathcal{M}$ such that $dQ_0/dP$ and $dP/dQ_0$ are both bounded (by some $K>0$).

Note that a convex
function attains its maximum on an interval at one of the endpoints. Hence
\[
U_n(x_0)-\vert x_0\vert K\leq {V}_n(dQ_0/dP)\leq \vert
{V}_n(K)\vert+\vert{V}_n(1/K)\vert
\]
which is bounded
by Proposition \ref{nulla}, showing that this condition is stronger that Assumption \ref{compact}.

We claim that, replacing Assumption \ref{compact} by this condition, one may drop Assumption \ref{elastic}
from the hypotheses of Theorem \ref{main}. Indeed, the result of
\cite{kabanov-stricker} directly implies that measures $Q_0$ with the above property are dense in $\mathcal{M}$,
without appeal to Assumption \ref{elastic}. The rest of the proof is identical.

Unfortunately, in continuous-time models one rarely finds such a $Q_0$.
In discrete-time models, however, such measures often exist, see \cite{rokhlin} for an extensive
discussion.
\end{remark}

We now turn to our second Theorem. To state its hypothesis, we first
need to introduce some normalization of the functions $U_n$. Fix some initial wealth $z\in\mathbb{R}$ and
set~:
\begin{eqnarray}
\label{tilde}
\tilde{U}_n(x):=\frac{U_n(x)-U_n(z)}{U_n'(z)},\quad
n\in\mathbb{N},x\in\mathbb{R}.
\end{eqnarray}
We now restate  Assumptions \ref{elastic} and \ref{compact} for
the family $(\tilde{U}_n)_n$.
\begin{assumption}\label{elastictilde} For each $[\lambda_0,\lambda_1]\subset (0,\infty)$ there exist constants
$C_1,C_2,C_3$ such that for all $n$ and for all $y>0$,
\begin{equation}\label{G}
\tilde{V}_n(\lambda y)\leq C_1\tilde{V}_n(y)+C_2y+C_3.
\end{equation}
holds for each $\lambda\in [\lambda_0,\lambda_1]$.
\end{assumption}
\begin{assumption}\label{compacttilde}
There exists $Q_0\in\mathcal{M}$ such that the sequence $\tilde{V}_n(dQ_0/dP)$ is uniformly
integrable (with respect to $P$).
\end{assumption}
\begin{remark}
We think that Assumptions \ref{elastictilde} and \ref{compacttilde} should be considered as technical
assumptions to be checked on $\tilde{U}_n$. We will see below that they hold true in our examples.
\end{remark}
\begin{theorem}\label{main2} Assume that Assumptions \ref{a1}, \ref{elastictilde}
and \ref{compacttilde} hold. Then
for each bounded random variable $G$ the
indifference prices $p_n(z,G)$ tend to $\pi(G)$ as $n\to\infty$.
\end{theorem}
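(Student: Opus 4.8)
The plan is to reduce Theorem \ref{main2} to Theorem \ref{main} by exploiting the normalization \eqref{tilde}. The crucial observation is that $\tilde{U}_n$ is obtained from $U_n$ by the \emph{increasing affine} transformation $\tilde{U}_n=(U_n-U_n(z))/U_n'(z)$ with $U_n'(z)>0$, and that both the hypotheses and the quantity of interest transform transparently under such maps.

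First I would verify that the normalized family $(\tilde{U}_n)_n$ satisfies all four hypotheses of Theorem \ref{main} with distinguished point $x_0:=z$. Assumption \ref{a1} is inherited because an increasing affine map preserves strict concavity and monotonicity, and the absolute risk aversion is invariant: $-\tilde{U}_n''/\tilde{U}_n'=-U_n''/U_n'=r_n\conv\infty$. Assumption \ref{a2} then holds automatically, and with the simplest possible constants, since by construction $\tilde{U}_n(z)=0$ and $\tilde{U}_n'(z)=1$ for every $n$; thus one takes $\alpha=1$ and $\beta=0$. Assumptions \ref{elastictilde} and \ref{compacttilde} are, by their very definition, Assumptions \ref{elastic} and \ref{compact} restated for $(\tilde{U}_n)_n$. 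Finally the mild hypothesis of Lemma \ref{dense}, that $\tilde{U}_n(z)$ be bounded below, is trivial here since $\tilde{U}_n(z)=0$.

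The second ingredient is that the utility indifference price is invariant under increasing affine rescaling of the utility. Writing $a:=U_n'(z)>0$ and $c:=U_n(z)$, the supremum defining $u_n$ in \eqref{max} commutes with the affine map (because $a>0$), giving $\tilde{u}_n(x,G)=(u_n(x,G)-c)/a$ for every $x$ and every claim. Hence the defining inequality $u_n(z+p,G)\geq u_n(z,0)$ in \eqref{xx} is equivalent to $\tilde{u}_n(z+p,G)\geq\tilde{u}_n(z,0)$, so the feasible sets of $p$ coincide and $p_n(z,G)=\tilde{p}_n(z,G)$. Since the superhedging price $\pi(G)$ is utility-free, applying Theorem \ref{main} to $(\tilde{U}_n)_n$ yields $\tilde{p}_n(z,G)\to\pi(G)$, and the finiteness of $\tilde{u}_n(z,G)$ it provides transfers to $u_n(z,G)=a\tilde{u}_n(z,G)+c$. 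Therefore $p_n(z,G)\to\pi(G)$, as claimed.

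There is no genuine analytic difficulty here: the whole content is the correct bookkeeping of the affine normalization, which serves precisely to manufacture Assumption \ref{a2} (absent from the hypotheses of Theorem \ref{main2}) while leaving the indifference price untouched. The only point deserving a moment's care is checking that the affine map passes through the supremum in \eqref{max} without altering it, so that the indifference price is truly preserved; granting that, the conclusion is an immediate invocation of Theorem \ref{main}.
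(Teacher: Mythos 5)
Your proposal is correct and follows essentially the same route as the paper: normalize to $\tilde{U}_n$, observe that Assumptions \ref{a1} and \ref{a2} hold for $\tilde{U}_n$ at $x_0=z$ (with $\alpha=1$, $\beta=0$), note that the indifference price is unchanged under the increasing affine rescaling, and invoke Theorem \ref{main}. The paper's proof is just a terser version of yours; your extra care about the supremum in \eqref{max} commuting with the affine map, and about the lower bound needed in Lemma \ref{dense}, only makes explicit what the paper leaves implicit.
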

\begin{proof}
Obviously, $\tilde{U}_n(z)=0$ and $\tilde{U}_n'(z)=1$, thus Assumption
\ref{a2} is satisfied for the sequence $\tilde{U}_n$ at $x_0=z$.
Let $\tilde{u}_n$ be defined analogously as in (\ref{max}), with $\tilde{U}_n$ replacing $U_n$.
Then since the $\tilde{U}_n$ are affine transforms of the $U_n$, we
may alternatively write
\[
p_n(z,G)=\inf\{ p\in\mathbb{R}: \tilde{u}_n(z+p,G)\geq
\tilde{u}_n(z,0)\}.
\]
It is obvious that if Assumption \ref{a1} hold for $U_n$ then it holds also for $\tilde{U}_n$,
so Theorem \ref{main} applied for $\tilde{U}_n$ and $x_0=z$ allow us to
conclude.
\end{proof}

\begin{example}(The exponential case) Let
\[
U_n(x)=\frac{1-\exp\{-\alpha_n x\}}{\alpha_n},
\]
with some $\alpha_n>0$ tending to $\infty$ as $n\to\infty$.
It is straightforward that $U_n(0)=0$, $U'_n(0)=1$, $r_n(x)= \alpha_n$ and thus
$U_n$ satisfy Assumption \ref{a1} and \ref{a2} for $x_0=0$.
Moreover, calculation gives
${V}_n(y)=(1/\alpha_n)[y\ln y+1-y]$ showing that Assumption
\ref{compact} holds provided that there is $Q_0\sim P$ with $E_{Q_0}
\ln(dQ_0/dP)<\infty$, i.e. whenever a finite-entropy martingale
measure exists. It is enough to show \eqref{GRRR} for one function,
namely $V(y)=y\ln y$. This is trivial since $\lambda y\ln (\lambda
y)\leq \lambda y\ln y+ \lambda \vert\ln\lambda\vert y$ for
$\lambda,y>0$.

So Theorem \ref{main} applies and the respective reservation prices $p_n(0,G)$
converge to $\pi(G)$.

In fact, one can check that Theorem \ref{main2} applies for each $z\in\mathbb{R}$.
We have thus retrieved the
result in \cite{sixauthor}. (They make a weaker moment assumption on $G$; this extension
follows by our method, too.)
\begin{eqnarray*}
\end{eqnarray*}
\end{example}

\begin{example}(The power case)
\[
U_n(x)=-\frac{1}{\alpha_n}[ (x+1)^{-\alpha_n}-1]
1_{\{x>0\}} -\frac{1}{\beta_n}[ (1-x)^{\beta_n}-1]1_{\{x\leq 0\}}
\]
with $\alpha_n>0,\beta_n>1$ tending to $\infty$. These functions are continuously
differentiable with strictly monotone derivatives, hence they are strictly concave.
They are also twice continuously differentiable on $\mathbb{R}\setminus\{0\}$ and satisfy
\eqref{r} on $\mathbb{R}\setminus\{0\}$. Take now $\beta_n:=\alpha_n+2$
then $U_n''$ exists and \eqref{r} holds in $0$, too.
\begin{eqnarray*}
U'_n(x) & = & (x+1)^{-(\alpha_n+1)}
1_{\{x>0\}} - (1-x)^{\alpha_n+1}1_{\{x < 0 \}} \\
U''_n(x) & = & -(\alpha_n+1)(x+1)^{-(\alpha_n+2)}
1_{\{x>0\}} - (\alpha_n+1)(1-x)^{\alpha_n} 1_{\{x <0 \}}\}
\end{eqnarray*}
Thus $U_n(0)=0$, $U'_n(0)=1$, $r_n(x)= (\alpha_n+1)(\frac{1_{\{x>0\}}}{x+1} + \frac{1_{\{x<0\}}}{1-x})
$ and $U_n$ satisfy Assumptions \ref{a1} and \ref{a2} for $x_0=0$.

It remains to check \eqref{GRRR} and Assumption \ref{compact}. We have, for $y\in (0,\infty)$,
\[
{V}_n(y)=\left[\frac{1}{\alpha_n+2}-y+\left(1-\frac{1}{\alpha_n+2}\right)y^{\frac{\alpha_n+2}{\alpha_n+1}}\right]1_{\{
y>1\}}
+\left[\frac{1}{\alpha_n}+y-\left(1+\frac{1}{\alpha_n}\right)y^{\frac{\alpha_n}{\alpha_n+1}}\right]1_{\{
y\leq 1\}}.
\]

Thus if there exists $Q_0\in\mathcal{M}$ such that $dQ_0/dP\in
L^{1+\varepsilon}$ for some $\varepsilon>0$ then Assumption
\ref{compact} will be satisfied for $n$ large enough since
$(\alpha_n+2)/(\alpha_n+1)$ converges to $1$. Finally, \eqref{GRRR} is
trivially satisfied by the scaling properties of the power
functions. We may conclude by Theorem \ref{main} that the
reservation prices $p_n(0,G)$ tend to $\pi(G)$ as $n\to\infty$.

Tedious calculations show that we get the same conclusion for any
initial capital $z\in\mathbb{R}$ and we may apply Theorem
\ref{main2}.
\end{example}

\end{document}